\documentclass[11pt,a4paper]{article}

\usepackage[utf8]{inputenc}
\usepackage{graphicx}   
\usepackage{float}   
\usepackage{amsmath, amsthm, amssymb, amsfonts}
\usepackage{mathrsfs}
\usepackage{geometry}
\usepackage{parskip} 

\usepackage[numbers,sort&compress]{natbib}

\usepackage{hyperref}
\hypersetup{
    colorlinks=true,
    linkcolor=blue,
    filecolor=magenta,      
    urlcolor=cyan,
    citecolor=red,
}

\newtheorem{theorem}{Theorem}[section]
\newtheorem{lemma}[theorem]{Lemma}

\newtheorem{definition}[theorem]{Definition}
\theoremstyle{remark}
\newtheorem{remark}[theorem]{Remark}
\theoremstyle{definition}

\title{\textbf{Existence of Positive Mild Eigenfunctions for Caputo Fractional Semilinear Evolution Equations with Nonlocal Initial Conditions}}

\author{
    Sajid Ullah$^{1,*}$, Assia Guezane-Lakoud$^{2}$
}
\date{} 
\begin{document}
\maketitle

\noindent
$^{1}$ Department of Mathematics and Computer Science, University of Calabria,\\ 
Ponte P. Bucci 30B, Rende (CS), Italy.\\
\texttt{sajid.ullah@unical.it}

\medskip

\noindent
$^{2}$ Laboratory of Systems and Advanced Materials, Department of Mathematics,\\
Badji Mokhtar-Annaba University, Annaba, Algeria.\\
\texttt{a\_guezane@yahoo.fr}
\medskip

\noindent
$^{*}$\,Corresponding author.
\begin{abstract}
We study the existence of positive eigenpairs for a class of Caputo fractional autonomous evolution equations with nonlocal initial condition within the framework of Banach lattices. The autonomous linear operator generates a compact strongly continuous semigroup of contractions, while the nonlinearity is a Caratheodory map. The mild eigenfunction is represented via the compact Mittag--Leffler operator families, we work within a positive cone of continuous functions and establish a uniform lower bound for the solution operator on the boundary. We apply the Birkhoff--Kellogg type theorem in cone for the existence of eigenpair. Our approach requires neither Lipschitz continuity of the nonlinearity nor the compactness of nonlocal initial operator, allowing for broad applicability to periodic, multi-point, and integral-type initial conditions. The theoretical results are applied to a parabolic fractional partial differential equation.
\end{abstract}
\smallskip
\noindent\textbf{Keywords:} Caputo fractional derivative; Semilinear evolution equation; Nonlocal initial conditions; $C_0$-semigroup; Birkhoff--Kellogg theorem.

\smallskip
\section{Introduction}
In this paper, we address the existence of a positive eigenvalue $\lambda > 0$ and a nonnegative mild eigenfunction associated to the abstract Cauchy problem
\begin{equation}\label{eq:main}
  \begin{cases}
    {}^C D_t^{\beta} u(t) = \mathscr{A}u(t) + \lambda\, \mathscr{F}(t, u(t)), & t \in [0,1], \\[4pt]
    u(0) = \lambda\, \mathscr{H}[u],
  \end{cases}
\end{equation}
where $0 < \beta < 1$, the operator $\mathscr{A}$ generates a compact and positive $C_0$-semigroup of contractions on a Banach lattice $V$, $\mathscr{F}$ is a Carath\'eodory nonlinearity mapping into the positive cone of $V$ and $\mathscr{H} : C([0,1];V) \to V$ is a nonlocal functional. ${}^C D_t^{\beta}$ denotes the Caputo fractional derivative of order $\beta$, and defined by
\[
{}^{C}D_t^\beta u(t)
= \frac{1}{\Gamma(1-\beta)} \int_0^t (t - s)^{-\beta} u'(s)\, ds.
\]
In this paper, we deal with the abstract theory of fractional evolution equations in the setting of Banach spaces, the nonlocal initial conditions, and the positivity of eigenpair for semilinear operator equations in ordered Banach spaces.

Fractional-order differential operators have been developed as essential tools for modeling processes where memory effects, generalized diffusion and hereditary properties play a fundamental role. The ability to encode nonlocal temporal correlations makes them well suited for viscoelastic constitutive relations, fractional parabolic dynamics, subdiffusive transport in heterogeneous media; we will refer the reader to~\cite{Kilbas2006, Zhou2016} for an extensive history of fractional calculus, including both its theory and applications. The study of autonomous evolution equations in infinite dimensional Banach spaces has developed rapidly in the past few decades. 

 The concept of a $\beta$-mild solution was introduced by Wang and Zhou~\cite{Wang2011}, for the Caputo fractional Cauchy problem via probability density functions and strongly continuous semigroups, proving the existence and uniqueness by a singular Gronwall-type inequality together with the Leray--Schauder fixed point theorem for compact maps. Zhou, Zhang, and Shen~\cite{Zhou2013} applied the framework to the Riemann--Liouville setting and relaxed the compactness requirement on the semigroup by means of the Hausdorff measure of noncompactness. The main feature of their approaches is the way in which the mild solution is represented by the Mittag-Leffler families of operators $\mathcal{S}_\beta(t), \mathcal{T}_\beta(t)$, which replace the classical semigroup in the variation of parameters formula. The compactness and positivity properties of these families play a very important role in the analysis of this work. We refer to ~\cite{Chang2009, Jaradat2008, Mophou2009a, Mophou2009b} and the references therein for further work in this direction.

Nonlocal initial conditions of the form $u(0) = \mathscr{H}[u]$, introduced by Byszewski~\cite{Byszewski1991, Byszewski1998} as a generalization 
of the classical datum $u(0) = u_0$, arise naturally when the system state must be inferred from weighted averages, multipoint 
measurements, or integrals over the time interval—as in Deng's~\cite{Deng1993} model of gas diffusion in a transparent tube.
The subsequent theory has developed along several directions. Liang, Liu, and Xiao~\cite{Liang2004} identified the 
support of the nonlocal operator as a key structural parameter, exploiting compact norm-continuity of the semigroup 
for $t>0$ when that support is bounded away from zero; Cardinali, Precup, and Rubbioni~\cite{Cardinali2015} gave a unified 
treatment tracing the progressive transition from Volterra to Fredholm type as the support grows. Hernández, dos Santos, and Azevedo~\cite{Hernandez2011} extended the analysis to the fractional-power scale $X_\alpha = \mathcal{D}((\mathscr{A})^{\alpha})$, 
handling nonlocal terms involving spatial derivatives. An approximation solvability method—combining Yosida approximations, 
continuation principles, and weak-topology arguments—has since removed compactness requirements entirely: see Benedetti, Loi, and Taddei~\cite{Benedetti2017} for the reflexive Banach-space setting, Xu, Colao, and Muglia~\cite{ColaoMuglia2021} for unbounded 
intervals, and Benedetti and Ciani~\cite{Benedetti2022} for superlinear growth. In the fractional framework, Chen and Feng~\cite{Chen2024} combined this approximation technique with a Hartman-type inequality and the Leray--Schauder continuation principle to 
establish existence for equation~\eqref{eq:main} (with $\lambda=1$) under superlinear $\mathscr{F}$, without Lipschitz regularity of $\mathscr{F}$ or compactness of $\mathscr{H}$.

A distinct direction concerns \emph{positive} solutions in ordered Banach spaces, relevant when the unknown represents a nonnegative physical quantity. In the integer-order setting, Infante and Maciejewski~\cite{InfanteMaciejewski} studied parabolic systems with nonlinear nonlocal initial conditions, proving existence, localization, and multiplicity of positive solutions via the Granas fixed point index in suitable cones. Infante and Rubbioni~\cite{InfanteRubbioni} investigated abstract semilinear evolution equations with functional initial conditions using a Birkhoff--Kellogg type theorem of Krasnosel'skiĭ and Ladyženskiĭ in ordered Banach spaces with normal cones, establishing the existence of a positive eigenvalue $\lambda > 0$ and a corresponding nonnegative mild eigenfunction of prescribed norm. Their framework handles periodic, multipoint, and integral-average conditions in a unified way and has been applied to reaction-diffusion models from heat conduction. 

Despite the advances described above, these threads have remained largely separate in the fractional setting: existence results for fractional evolution equations~\cite{Chen2024} focus on sign-indefinite solutions without addressing positivity or spectral structure, while the eigenvalue theory of~\cite{InfanteRubbioni} is confined to first-order equations and does not extend directly to the Caputo setting, where the Mittag-Leffler families $\mathcal{S}_\beta(t)$ and $\mathcal{T}_\beta(t)$ require independent analysis. The present paper fills this gap by developing a Birkhoff--Kellogg type eigenvalue theory for~\eqref{eq:main} in Banach lattices, replacing the Leray--Schauder principle of~\cite{Chen2024} with a cone-based approach following~\cite{InfanteRubbioni} and extending it to the fractional framework. Under verifiable lower-bound assumptions on the Carathéodory nonlinearity $\mathscr{F}$ and the nonlocal functional $\mathscr{H}$—requiring neither Lipschitz continuity of $\mathscr{F}$ nor compactness of $\mathscr{H}$—we show that for each prescribed norm $\alpha > 0$ there exist a positive eigenvalue $\lambda_\alpha > 0$ and a nonnegative mild eigenfunction $u_\alpha$ with $\|u_\alpha\|_\infty = \alpha$. The framework subsumes periodic, multipoint, and integral-type nonlocal conditions as special cases, and simultaneously generalizes the integer-order theory of~\cite{InfanteRubbioni, InfanteMaciejewski} and the fractional existence theory of~\cite{Wang2011, Chen2024}.
 
The rest of the paper is organized as follows. The preliminary material, such as the key definitions and properties of the Caputo derivative, the Mittag-Leffler operator families $\mathcal{S}_\beta(t)$ and $\mathcal{T}_\beta(t)$ and their
positivity and compactness and the cone theory applicable to ordered Banach spaces, can be found in Section \ref{sec:prelim}. Section \ref{main} contains the formulation of the problem, the main hypotheses and the existence theorem. In Section~\ref{App} we show how the abstract theory can be applied to practical examples.

\section{Preliminaries}\label{sec:prelim}
We recall some basic definitions and results that will be used throughout the paper. Let $V$ be a real vector space and $Z$ be a nonempty subset of $V$. We call $Z$ a convex cone if 
\begin{itemize}
    \item[Z1)] $Z+Z\subset Z$;
    \item[Z2)] $\zeta Z \subset Z$ for all $\zeta\ge 0$;\\
In addition, $Z$ is called a pointed convex cone if 
    \item[Z3)] $Z\cap (-Z)=\{0\}$. 
\end{itemize}
A pointed convex cone induces a partial order $\preceq$ on $V$ by 
\[
    u \preceq v \;\text{iff}\; v-u \in Z.
\]
The partial order is compatible with the algebraic structure of the space, meaning that the following properties hold:
\begin{itemize}
    \item[(1)] $u\preceq v\; \implies \; u+w\preceq v+w$, for every $u, v, w \in V$ 
    \item[(2)] $u\preceq v\; \implies \; \zeta u\preceq \zeta v$, for every $u, v \in V$ and $\zeta \ge 0$.
\end{itemize}
An ordered vector space $(V, \preceq)$ is a real vector space equipped with such a partial order relation. The set 
\[
    V^+=\{ u\in V\;|\; 0\preceq u\}  
\]
is called a positive cone of $V$, which is also a pointed convex cone.  A vector lattice $(V, \preceq)$ is an order vector space such that every pair of elements has an infimum and a supremum. 

We represent by $(\Lambda, \Sigma, \mu)$ a measure space and by $(V, \, \|\cdot \|_V)$ a real Banach space. A normed vector lattice $(V,\,\|\cdot \|,\,\preceq)$ is a vector lattice equipped with the norm satisfying $|u|\preceq |v|\implies\|u\|\le \|v\|$. A Banach lattice $(V, \, \|\cdot \|_V, \preceq)$ is a complete normed vector lattice. Let $(\Lambda, \Sigma, \mu)$ be a measure space and let $g: \Lambda\to V$ be a strongly measurable function. Suppose that there exists a sequence of simple functions $\{\theta_n\}_{n\in \mathbb{N}}$ in $V$ such that $\lim_{n\to \infty}\|g(x)-\theta_n(x)\|_V=0$ for $\mu$-almost all $x\in \Lambda$. A strongly $\mu$-measurable function $g: \Lambda\to V$ is Bochner integrable if there exists a sequence $\{\theta_n\}_{n\in \mathbb{N}}$ of $V$-step functions such that the real measurable function $\|g-\theta_n\|_V\in L^1(\Lambda)$ and $\lim_{n\to \infty}\int \|g-\theta_n\|_Vd\mu=0$. Then for every $I\in \Sigma$, the Bochner integral of $g$ over $I$ is defined by
    \[
        \int_I g\,d\mu= \lim_{n\to \infty} \int_I \theta_n \,d\mu.
    \]

Moreover, the Bochner integral is monotone in the sense of partial ordering $\preceq$. For further properties of the Bochner integral, we refer the reader to \cite{AB2006}.

Let $(V,\,\|\cdot\|)$ be a Banach space, we represent by $\mathcal{L}(V)$ the space of all bounded linear operators on $V$. In this paper, we denote a strongly continuous semigroup by $C_0$-semigroup.

\begin{definition}\cite{Pazzy}
Let $V$ be a Banach space. A one-parameter family of bounded linear operators $\{T(t)\}_{0 \leq t}$  from $V$ into $V$ is called a \emph{semigroup} on $V$ if
\begin{itemize}
    \item[($S_1$)] $T(0) = I$, where $I$ is the identity operator on $V$.
    \item[($S_2$)] $T(t + s) = T(t)T(s)$ for every $t, s \geq 0$ (the semigroup property).
    \item[($S_3$)] for each fixed $u \in V$, the mapping $t \mapsto T(t)u$ is continuous on $t\ge 0$.
\end{itemize}
\end{definition}

The infinitesimal generator of the semigroup $T(t)$ is a  linear operator $\mathscr{A}: \mathcal{D}(\mathscr{A}) \rightarrow V$ defined by
\[
\mathcal{D}(\mathscr{A}) = \left\{ u \in X : \lim_{t \downarrow 0}\frac{T(t)u - u}{t} \text{ exists} \right\}
\]
and
\[
\mathscr{A}u = \lim_{t \downarrow 0}\frac{T(t)u - u}{t} \quad \text{for } u \in \mathcal{D}(\mathscr{A}).
\]
Let $T(t)$ be a $C_0$-semigroup, then there exist constants $\delta$ and $M$ such that 
\[
    \|T(t)\|\le M e^{t\delta} \quad for \quad t\ge 0, 
\]
a  $C_0$-semigroup is called contraction $C_0$-semigroup if $\delta=0$ and $M=1$ (see Section 1.2 \cite{Pazzy}). 
\begin{definition} \cite{Pazzy}
    A $C_0$ semigroup $T(t)$ is called compact for $t > t_0$ if for every $t>t_0$, $T(t)$ is a compact operator. $T(t)$ is called compact if it is compact for $t> 0$.
\end{definition}
\begin{theorem}(Them. 3.3\cite{Pazzy})\label{Them. 3.3}
    A $C_0$ semigroup $T(t)$ is a compact semigroup iff $T(t)$ is continuous in the uniform operator topology for $t>0$ and the resolvent operator is compact. 
\end{theorem}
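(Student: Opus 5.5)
We argue the two implications separately, using the resolvent as the Laplace transform of the semigroup. Fix $\omega>\delta$ with $\|T(t)\|\le Me^{\delta t}$; then for $\mathrm{Re}\,\lambda>\omega$ we have $R(\lambda;\mathscr{A})u=\int_0^\infty e^{-\lambda t}T(t)u\,dt$. By the resolvent identity, compactness of $R(\lambda;\mathscr{A})$ for one $\lambda$ in the resolvent set gives it for all such $\lambda$. So it is enough to work with a single real $\lambda>\omega$.

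\emph{Compact $\Rightarrow$ uniform continuity and compact resolvent.} Fix $t_0>0$ and $0<\varepsilon<t_0$. For $h\ge -\varepsilon$ we write
\[
T(t_0+h)-T(t_0)=\bigl(T(\varepsilon+h)-T(\varepsilon)\bigr)T(t_0-\varepsilon).
\]
The set $K=\overline{T(t_0-\varepsilon)B}$, with $B$ the closed unit ball, is compact. On the bounded interval of $h$ the family $T(\varepsilon+h)-T(\varepsilon)$ is uniformly bounded and tends to $0$ strongly. By an $\varepsilon/3$ argument on a finite net of $K$, strong convergence becomes uniform on $K$, so $\|T(t_0+h)-T(t_0)\|\to 0$. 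For the resolvent, set
\[
R_\varepsilon=\int_\varepsilon^\infty e^{-\lambda t}T(t)\,dt=T(\varepsilon)\int_0^\infty e^{-\lambda(s+\varepsilon)}T(s)\,ds=e^{-\lambda\varepsilon}T(\varepsilon)R(\lambda;\mathscr{A}).
\]
This is compact because $T(\varepsilon)$ is compact. Also $\|R(\lambda;\mathscr{A})-R_\varepsilon\|\le \int_0^\varepsilon Me^{(\delta-\lambda)t}dt\to 0$. Compact operators form a closed subspace, so $R(\lambda;\mathscr{A})$ is compact.

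\emph{Uniform continuity and compact resolvent $\Rightarrow$ compact.} This is the harder direction. We first show $T(t)R(\lambda;\mathscr{A})$ is compact, since it equals $R(\lambda;\mathscr{A})T(t)$. Next we approximate $T(t)$ in operator norm by such operators. The natural candidate is $\lambda R(\lambda;\mathscr{A})T(t)$, using the identity
\[
\lambda R(\lambda;\mathscr{A})T(t)-T(t)=\int_0^\infty \lambda e^{-\lambda s}\bigl(T(t+s)-T(t)\bigr)\,ds .
\]
We split the integral at a small $\eta>0$. On $[0,\eta]$, norm continuity of $s\mapsto T(t+s)$ at $t>0$ gives $\sup_{s\le\eta}\|T(t+s)-T(t)\|$ small. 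The tail over $[\eta,\infty)$ is bounded by $\int_\eta^\infty \lambda e^{-\lambda s}(Me^{\delta(t+s)}+Me^{\delta t})\,ds$, which tends to $0$ as $\lambda\to\infty$. Hence $\|\lambda R(\lambda;\mathscr{A})T(t)-T(t)\|\to 0$, and $T(t)$ is a norm limit of compact operators, so it is compact for every $t>0$.

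The main obstacle is this last estimate. Uniform continuity is only available for $t>0$, which is why the whole argument is routed through $T(t)$ with $t>0$ and not through $\lambda R(\lambda;\mathscr{A})\to I$, which converges only strongly. The integral representations themselves are routine Bochner integrals, as recalled earlier.
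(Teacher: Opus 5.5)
The paper gives no proof of this statement and simply cites Pazy's Theorem~2.3.3; your argument is correct and is essentially Pazy's proof. It uses the same three ingredients: a finite-net argument turning compactness into norm continuity, compactness of $R(\lambda;\mathscr{A})$ as the norm limit of $R_\varepsilon=\int_\varepsilon^\infty e^{-\lambda t}T(t)\,dt$, and the converse via $\|\lambda R(\lambda;\mathscr{A})T(t)-T(t)\|\to 0$ as $\lambda\to\infty$. Your departures are minor streamlinings. Putting the compact factor $T(t_0-\varepsilon)$ on the right gives two-sided norm continuity in one step. The factorization $R_\varepsilon=e^{-\lambda\varepsilon}T(\varepsilon)R(\lambda;\mathscr{A})$ shows directly that $R_\varepsilon$ is compact, so that step does not rely on norm continuity.
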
   
    Let \( u \in V \). We define two operators \( \mathcal{S}_\beta(t) \) and \( \mathcal{T}_\beta(t) \) for \( t \ge 0 \) by
\[
\mathcal{S}_\beta(t)u = \int_0^\infty k_\beta(\tau)\, T\!\big(t^\beta \tau\big)\, u \, d\tau,
\]
\[
\mathcal{T}_\beta(t)u = \beta \int_0^\infty \tau\, k_\beta(\tau)\, T\!\big(t^\beta \tau\big)\, u \, d\tau,
\]
where
\[
k_\beta(\tau) = \frac{1}{\pi \beta} \sum_{n=1}^\infty (-\tau)^{n-1} \frac{\Gamma(n\beta + 1)}{n!} \sin(n\pi \beta), 
\quad \tau \ge 0.
\]

Moreover, \( k_\beta \) satisfies
\[
k_\beta(\tau) \ge 0, \quad \tau \ge 0, 
\qquad 
\int_0^\infty k_\beta(\tau)\, d\tau = 1,
\]
and
\[
\int_0^\infty \tau\, k_\beta(\tau)\, d\tau = \frac{1}{\Gamma(1+\beta)}.
\]
The following properties of the operators \( \mathcal{S}_\beta(t) \) and \( \mathcal{T}_\beta(t) \) are given in ~\cite{Wang2011,Zhou2013,CZL2020}, and they will be used in our analysis.

\begin{lemma}\label{L1}
Assume that $\{T(t)\}_{t \geq 0}$ is a compact $C_0$-semigroup of contractions generated by the linear operator $\mathscr{A}$, then the following hold:

\begin{enumerate}
    \item $\mathcal{S}_\beta(t)$ and $\mathcal{T}_\beta(t)$ are linear and bounded operators, i.e. for any $u \in V$, we have
    \[
        \|\mathcal{S}_\beta(t)u\|_V \leq \|u\|_V, \quad
        \|\mathcal{T}_\beta(t)u\|_V \leq \frac{\beta}{\Gamma(\beta+1)}\|u\|_V;
    \]

    \item the operators $\mathcal{S}_\beta(t)$ and $\mathcal{T}_\beta(t)$ are strongly continuous, i.e. for every $u \in V$ and $0 \leq t_2 < t_1 \leq T$, we have
    \[
        \|\mathcal{S}_\beta(t_2)\,u - \mathcal{S}_\beta(t_1)\,u\|_V \to 0, \quad
        \|\mathcal{T}_\beta(t_2)\,u - \mathcal{T}_\beta(t_1)\,u\|_V \to 0
        \quad \text{as } t_2 \to t_1;
    \]

    \item $\mathcal{S}_\beta(t)$ and $\mathcal{T}_\beta(t)$ are compact operators for every $t > 0$.
\end{enumerate}
\end{lemma}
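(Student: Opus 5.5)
The plan is to derive all three properties directly from the subordination formulas defining $\mathcal{S}_\beta(t)$ and $\mathcal{T}_\beta(t)$. We use the facts about $k_\beta$ recalled above: $k_\beta\ge 0$, $\int_0^\infty k_\beta=1$ and $\int_0^\infty \tau k_\beta(\tau)\,d\tau = 1/\Gamma(1+\beta)$. We also use that $\|T(s)\|\le 1$ for all $s\ge0$.

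\textbf{Linearity and boundedness.} Linearity is immediate from the linearity of $T(s)$ and of the Bochner integral. For Bochner integrability, the integrands $\tau\mapsto k_\beta(\tau)T(t^\beta\tau)u$ and $\tau\mapsto \tau k_\beta(\tau)T(t^\beta\tau)u$ are continuous in $\tau$ by $(S_3)$, hence strongly measurable. Their norms are dominated by $k_\beta(\tau)\|u\|_V$ and $\tau k_\beta(\tau)\|u\|_V$, which lie in $L^1(0,\infty)$. The norm inequality for Bochner integrals then gives
\[
\|\mathcal{S}_\beta(t)u\|_V\le \int_0^\infty k_\beta(\tau)\|u\|_V\,d\tau=\|u\|_V,
\]
and similarly $\|\mathcal{T}_\beta(t)u\|_V\le \beta\|u\|_V/\Gamma(1+\beta)$.

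\textbf{Strong continuity.} Fix $u$ and let $t_2\to t_1$. Then
\[
\|\mathcal{S}_\beta(t_2)u-\mathcal{S}_\beta(t_1)u\|_V\le \int_0^\infty k_\beta(\tau)\,\|T(t_2^\beta\tau)u-T(t_1^\beta\tau)u\|_V\,d\tau .
\]
For each fixed $\tau$ the integrand tends to $0$, because $s\mapsto T(s)u$ is continuous and $t\mapsto t^\beta$ is continuous on $[0,T]$. The integrand is also dominated by $2k_\beta(\tau)\|u\|_V$, so dominated convergence gives the limit $0$. The same argument with the weight $\beta\tau k_\beta(\tau)$ handles $\mathcal{T}_\beta$. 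This covers the endpoint $t_1=0$ as well, since then $T(0)=I$.

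\textbf{Compactness for $t>0$.} This is the main step. Fix $t>0$ and $\varepsilon>0$, and write $\mathcal{S}_\beta(t)=\mathcal{S}^\varepsilon+R^\varepsilon$, where
\[
\mathcal{S}^\varepsilon u=\int_\varepsilon^\infty k_\beta(\tau)T(t^\beta\tau)u\,d\tau,\qquad \|R^\varepsilon\|\le\int_0^\varepsilon k_\beta(\tau)\,d\tau\to0 .
\]
Using the semigroup property, $\mathcal{S}^\varepsilon u=T(t^\beta\varepsilon)\int_\varepsilon^\infty k_\beta(\tau)T(t^\beta(\tau-\varepsilon))u\,d\tau$. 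The inner integral is a bounded operator, and $T(t^\beta\varepsilon)$ is compact because $t^\beta\varepsilon>0$, so $\mathcal{S}^\varepsilon$ is compact. Hence $\mathcal{S}_\beta(t)$ is a limit in operator norm of compact operators, and is therefore compact.

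An alternative route uses Theorem \ref{Them. 3.3}: norm continuity of $T$ on $(0,\infty)$ makes the integral over $[\varepsilon,N]$ a norm limit of Riemann sums of compact operators. The factorization above avoids this and is cleaner. The same decomposition with weight $\beta\tau k_\beta$ gives compactness of $\mathcal{T}_\beta(t)$. The only delicate point is justifying that the tail $\int_0^\varepsilon$ of the weight is small and that bounded operators may be pulled out of Bochner integrals, both of which are standard.
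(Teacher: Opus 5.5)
Your proof is correct. The paper does not prove this lemma at all; it only quotes it from \cite{Wang2011,Zhou2013,CZL2020}, and your argument is the standard one behind those citations. Your three steps are the usual ones.
\begin{enumerate}
\item The norm bounds come from $k_\beta\ge 0$, its zeroth and first moments, and $\|T(s)\|\le 1$.
\item Strong continuity follows by dominated convergence.
\item Compactness for $t>0$ comes from the factorization $T(t^\beta\tau)=T(t^\beta\varepsilon)\,T(t^\beta(\tau-\varepsilon))$ for $\tau\ge\varepsilon$, plus the tail estimate $\int_0^\varepsilon k_\beta\to 0$ (resp.\ $\beta\int_0^\varepsilon \tau k_\beta\to 0$). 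The paper uses this same device for $\mathcal{G}$ in the proof of Theorem~\ref{existence}.
\end{enumerate}
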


Let $(V, \succeq, \|\cdot\|)$ be a Banach lattice, we recall the definition of positive semigroup.  A $C_0$ semigroup $\{T(t)\}_{t\ge 0}$ on a Banach lattice $V$ is called positive if each operator $T(t)$ is positive, i.e.,
    \[
        T(t)u\succeq 0 \; \text{for all}\; t\ge 0, \;\; \text{whenever} \; u\succeq 0.
    \]
If $T(t)$ is positive, then both $\mathcal{S}_\beta(t)$ and $\mathcal{T}_\beta(t)$ are also positive.


\section{Main Result}\label{main}
In this section, we provide an existence result for the mild eigenfunction and the corresponding eigenvalue. first, we recall some useful ingredients. Let $(V, \succeq, \|\cdot\|)$ is Banach lattice, where the partial order $\succeq$ is induced by a given normal cone $Z \subset V$. A cone $Z$ is said to be normal if there exists $c>0$ such that 
\[
   0 \preceq u \preceq v \;\Rightarrow\; \|u\| \leq c \|v\|, \quad \text{for all } u, v \in Z,
\]
where $\preceq$ is the partial ordering induced by $Z$.
Define $K$ as the positive cone in $C([0,1];V)$ by
\[
    K=\{u \in C([0,1]; V) : u(t) \succcurlyeq 0, \text{ for every } t \in [0,1]\}.
\]
Then $C([0,1];V)$, equipped with the partial order induced by $K$, is itself a Banach lattice. For every $\alpha\in (0,\; \infty)$, we consider the following
\[
K_\alpha := \{u \in K : \|u\|_{\infty} < \alpha\}, \quad \overline{K_\alpha} := \{u \in K : \|u\|_{\infty} \leq \alpha\}, \quad \partial K_\alpha := \{u \in K : \|u\|_{\infty} = \alpha\}.
\]
and
\[
Z_\alpha := \{u \in Z : \|u\|_V < \alpha\}, \quad \overline{Z_\alpha} := \{u \in Z : \|u\|_V \leq \alpha\}, \quad \partial Z_\alpha := \{u \in Z : \|u\|_V = \alpha\}.
\]
We use the following Birkhoff-Kellogg type theorem on cones, for the existence of eigenvalue and corresponding mild eigenfunction.
\begin{theorem}\cite[Theorem~5.5]{Krasnoselskii1964}, \cite{KrasnoselskiiLadyzhenskii1954}\label{B-K}
    Let $(V,\| \, \|)$ be a real Banach space, let $\Phi:\overline{K}_{\alpha}\to K$ be compact and suppose that 
    \[
        \inf_{x\in \partial K_{\alpha}}\|\Phi x\|>0.
    \]
    Then there exist $\lambda_{0}\in (0,+\infty)$ and $u_{0}\in \partial K_{\alpha}$ such that $u_{0}=\lambda_{0} \Phi u_{0}.$
\end{theorem}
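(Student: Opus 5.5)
The plan is to argue by contradiction with the fixed point index on the cone $K$ (relative to the open set $K_\alpha$). Set $m:=\inf_{x\in\partial K_\alpha}\|\Phi x\|>0$ and suppose, contrary to the claim, that $x\neq\lambda\Phi x$ for every $\lambda>0$ and every $x\in\partial K_\alpha$. We will compute the index $i_K(\lambda\Phi,K_\alpha)$ in two ways and obtain $1=0$.

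\emph{Step 1 (index one).} Consider the compact homotopy $H(x,t)=t\Lambda\Phi x$ for $t\in[0,1]$, where $\Lambda>0$ is fixed. It has no fixed points on $\partial K_\alpha$. For $t>0$ this is exactly the contradiction hypothesis with $\lambda=t\Lambda$. For $t=0$ the map is $0$, and $0\notin\partial K_\alpha$. Homotopy invariance and the normalization property (constant map with value in $K_\alpha$) then give $i_K(\Lambda\Phi,K_\alpha)=i_K(0,K_\alpha)=1$ for every $\Lambda>0$.

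\emph{Step 2 (index zero for large $\Lambda$).} This is the main obstacle. $\Phi$ is controlled only on $\partial K_\alpha$ and may vanish inside $K_\alpha$, so we first modify it in the interior without changing it on the boundary.
\begin{itemize}
\item Fix $e\in K$ with $\|e\|=\alpha$. For $x\in\overline{K}_\alpha$ put $s(x)=\max\{0,\,1-2\|x\|/\alpha\}$ and $y(x)=x+s(x)e\in K$.
\item The vector $y(x)$ never vanishes. If $y(x)=0$, then $x=-s(x)e\in K\cap(-K)=\{0\}$, hence $s(x)=1$ and $y(x)=e\neq0$, a contradiction.
\item Define $\widehat\Phi(x)=\Phi\big(\alpha\,y(x)/\|y(x)\|\big)$. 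This map is continuous, its image lies in $\Phi(\partial K_\alpha)$ and is relatively compact, and $\|\widehat\Phi x\|\ge m$ on all of $\overline K_\alpha$.
\item On $\partial K_\alpha$ we have $s=0$, so $y(x)=x$ and $\widehat\Phi=\Phi$ there.
\end{itemize}
The compact homotopy $(x,t)\mapsto\Lambda\big((1-t)\Phi x+t\widehat\Phi x\big)$ coincides with $\Lambda\Phi$ on $\partial K_\alpha$, so it has no boundary fixed points. Hence $i_K(\Lambda\Phi,K_\alpha)=i_K(\Lambda\widehat\Phi,K_\alpha)$. Now choose $\Lambda>\alpha/m$. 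Then $\|\Lambda\widehat\Phi x\|\ge\Lambda m>\alpha\ge\|x\|$ for every $x\in\overline K_\alpha$, so $\Lambda\widehat\Phi$ has no fixed point in $\overline K_\alpha$. By the existence property of the index, $i_K(\Lambda\widehat\Phi,K_\alpha)=0$.

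\emph{Step 3 (conclusion).} Steps 1 and 2 give $1=0$, a contradiction. Hence there exist $\lambda_0>0$ and $u_0\in\partial K_\alpha$ with $u_0=\lambda_0\Phi u_0$. Moreover $\lambda_0=\|u_0\|/\|\Phi u_0\|=\alpha/\|\Phi u_0\|\le\alpha/m$.

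The only genuinely delicate point is the construction of $\widehat\Phi$: it must avoid $0$ on the whole closed ball while agreeing with $\Phi$ on the sphere. Pointedness of $K$ (property Z3) is exactly what makes $y(x)\neq0$ work. The rest uses only the standard properties of the Granas fixed point index for compact maps on retracts.
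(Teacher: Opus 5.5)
The paper gives no proof of this result: it only cites Krasnosel'skii and Krasnosel'skii--Ladyzhenskii, so there is no in-paper argument to compare against. Your argument is correct as it stands. The homotopy $t\Lambda\Phi$ is admissible because boundary fixed points with $t>0$ are excluded by the contradiction hypothesis, and $0\notin\partial K_\alpha$. The relation $y(x)\neq 0$ is exactly where $K\cap(-K)=\{0\}$ is needed. Without pointedness the statement is false: take $K=\mathbb{R}^2$ and $\Phi$ a rotation by $\pi/2$. Finally, the homotopy joining $\Lambda\Phi$ and $\Lambda\widehat\Phi$ is constant on $\partial K_\alpha$, so both index computations are legitimate.

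You should state explicitly that $K\neq\{0\}$. You use it to pick $e$, and the statement tacitly needs it: if $K=\{0\}$, then $\partial K_\alpha=\emptyset$, the infimum condition holds vacuously, and the conclusion fails.

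Your auxiliary construction actually yields a shorter proof, with no index theory and no contradiction. The map $r(x)=\alpha\,y(x)/\|y(x)\|$ is a retraction of $\overline K_\alpha$ onto $\partial K_\alpha$. The map $x\mapsto \alpha\,\Phi(rx)/\|\Phi(rx)\|$ is then a compact self-map of the closed, bounded, convex set $\overline K_\alpha$, since its denominator is at least $m$. Schauder's theorem gives a fixed point $u_0$. It has norm $\alpha$, so $u_0\in\partial K_\alpha$ and $r(u_0)=u_0$. Hence $u_0=\lambda_0\Phi u_0$ with $\lambda_0=\alpha/\|\Phi u_0\|\le \alpha/m$.

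That route needs only Schauder's theorem. Yours invokes the full fixed point index on retracts (normalization, homotopy invariance, existence property) and argues by contradiction. In exchange, your version sits naturally in the cone-index framework used in the related literature. It also records that $i_K(\lambda\Phi,K_\alpha)=1$ for every $\lambda>0$ whenever $\partial K_\alpha$ carries no eigenvector, which is the kind of computation one reuses for localization or multiplicity results.
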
 
\begin{definition}
    We say that a couple $(\lambda, u)$ is a solution of the problem \eqref{eq:main}, where $\lambda\in \mathbb{R}^+$ and $u\in C([0, 1], V)$ if the following integral equation
    \begin{equation}
        u(t)=\lambda\mathcal{S}_{\beta}(t)\mathscr{H}[u]+\lambda\int_0^t(t-r)^{\beta-1}\mathcal{T}_{\beta}(t-r)\mathscr{F}(r, u(r))dr,\quad\text{for every}\; t\in[0, 1].
    \end{equation}
    holds. In this case, we call $u$ a mild eigenfunction of \eqref{eq:main} corresponding to the eigenvalue $\lambda$.
\end{definition}
Now, we can state and prove our main result as follows.
\begin{theorem}\label{existence}
    Assume that the following assumptions hold.
    \begin{enumerate}
        \item[(a1)] The infinitesimal generator $\mathscr{A}$ generates a compact and positive $C_0$ semigroup of contractions.
        \item[(f1)] The nonlinear map $\mathscr{F}: [0,\,1]\times \bar{Z}_{\alpha}\to Z$ is a Caratheodory map, i.e., $\mathscr{F}(\cdot, \, u):[0,\,1]\to Z$ is measurable for every $u\in \bar{Z}_{\alpha}$ and $\mathscr{F}(t, \,\cdot): \bar{Z}_{\alpha}\to Z$ is continuous for every $t\in [0, \,1]$.
        \item[(f2)] $\mathscr{F}: [0,\,1]\times \bar{Z}_{\alpha}\to Z$ is mapping bounded sets into bounded sets and there exists $\gamma_{\alpha}:[0,\,1]\to Z$ such that 
        \[
            \mathscr{F}(t,\,u(t))\succeq \gamma_{\alpha}(t) \;\;\text{for every}\; t\in [0,\,1],\;u\in \partial K_{\alpha}. 
        \]
        \item[(h1)] $\mathscr{H}:C([0,\,1]\times \bar{Z}_{\alpha})\to Z$ is a bounded linear operator, mapping bounded sets into bounded sets, and there exists $\Phi_{\alpha}\in  Z$ such that 
        \[
            \mathscr{H}[u]\succeq \Phi_{\alpha}\;\;\text{for every}\; u\in \partial K_{\alpha}
        \]
        \item[(h2)] There exists $t_0\in(0,\,1]$ such that
        \[
            \big\|\mathcal{S}_{\beta}(t_0)\Phi_{\alpha}+\int_0^{t_0}(t_0-r)^{\beta-1}\mathcal{T}_{\beta}(t_0-r)\gamma_{\alpha}(r)dr\big\|>0,
        \]
        where $\mathcal{S}_{\beta}(t)$ and $\mathcal{T}_{\beta}(t)$ satisfy conditions $(1)-(3)$ of Lemma \ref{L1}.
    \end{enumerate}
    Then there exists a positive eigenvalue $\lambda_{\alpha}$ and the corresponding eigenfunction $u_\alpha\in\partial K_{\alpha}$ such that $(\lambda_{\alpha},\, u_{\alpha})$ solve the problem \eqref{eq:main}.
\begin{proof}
    As we assumed $\mathscr{A}$ generates a compact and positive $C_0$ semigroup of contractions, then we have 
    \[
        \|T(t)\|_{\mathcal{L}(V)}\le 1\quad \text{for all}\; t\ge 0.
    \]
Now, we consider the operator $\mathscr{T}:\bar{K}_{\alpha}\to K$ defined by 
\[
    \mathscr{T}u(t)=\mathcal{S}_{\beta}(t)\mathscr{H}[u]+\int_0^{t}(t-r)^{\beta-1}\mathcal{T}_{\beta}(t-r)\mathscr{F}(r,\,u(r))dr\quad t \in [0,1].
\]
We proceed to verify that $\mathscr{T}$ is cone invariant and is a compact operator. We then establish the lower bound needed to apply the 
Birkhoff--Kellogg theorem.

Let $u \in \bar{K}_\alpha$ be arbitrary. Since $u(t) \succeq 0$ for every $t\in[0,1]$ and by $(h1)$, we have $\mathscr{H}[u] \succeq 0$. Similarly, $\mathscr{F}(r, u(r)) \succeq 0$ for every $r\in[0,1]$ by $(f1)$ and $(f2)$.

Since $\mathscr{A}$ generates a positive $C_0$-semigroup by (a1), both $\mathcal{S}_\beta(t)$ and $\mathcal{T}_\beta(t)$ are positive operators. Therefore,
\[
    \mathcal{S}_\beta(t)\mathscr{H}[u] \succeq 0 \quad \text{and}\quad \mathcal{T}_\beta(t-r)\,\mathscr{F}(r, u(r)) \succeq 0.
\]
By the monotonicity of the Bochner integral with respect to the partial order $\succeq$, and the fact that $(t-r)^{\beta-1} \geq 0$ for $0 \leq r \leq t$, we conclude that $(\mathscr{T}u)(t) \succeq 0$ for every $t\in[0,1]$.

Moreover, $\mathscr{T}u$ is continuous on $[0,1]$ by the strong continuity of $\mathcal{S}_\beta(\cdot)$ and $\mathcal{T}_\beta(\cdot)$ (Lemma~\ref{L1}, part (2)) and the properties of the Bochner integral. Thus $\mathscr{T}u \in C([0,1];V)$, and so $\mathscr{T}u \in K$.

Let $\{u_n\}_{n\in\mathbb{N}} \subset \bar{K}_\alpha$ be a sequence converging to 
$u^* \in \bar{K}_\alpha$ in the sup-norm $\|\cdot\|_\infty$. For every $t\in[0,1]$ 
and $n\in\mathbb{N}$, using Lemma~\ref{L1} part (1):
\begin{align*}
    &\|(\mathscr{T}u_n)(t) - (\mathscr{T}u^*)(t)\|\le \|\mathcal{S}_\beta(t)\|_{\mathcal{L}(V)}\|\mathscr{H}[u_n] - \mathscr{H}[u^*]\| \\
    &\quad + \int_0^t (t-r)^{\beta-1}\|\mathcal{T}_\beta(t-r)\|_{\mathcal{L}(V)}
        \|\mathscr{F}(r, u_n(r)) - \mathscr{F}(r, u^*(r))\|\,dr \\
    &\le \|\mathscr{H}[u_n] - \mathscr{H}[u^*]\|
      + \frac{\beta}{\Gamma(\beta+1)}\int_0^1(t-r)^{\beta-1}
        \|\mathscr{F}(r,u_n(r))-\mathscr{F}(r,u^*(r))\|\,dr.
\end{align*}

By assumption \textbf{(f2)}, the set $\mathscr{F}([0,1]\times\bar{Z}_\alpha)$ is bounded, so 
there exists $M_\alpha > 0$ such that
\begin{equation}\label{eq:2}
     \|\mathscr{F}(r, v)\| \le M_\alpha \quad \text{for every } r\in[0,1],\; v\in\bar{Z}_\alpha.
\end{equation}
Therefore
\[
    \|\mathscr{F}(r, u_n(r)) - \mathscr{F}(r, u^*(r))\| \le 2M_\alpha
    \quad\text{for every } r\in[0,1],\; n\in\mathbb{N}.
\]
Since the function $(t-r)^{\beta-1}$ is integrable on $[0,t]$ for $\beta\in(0,1)$, 
we may apply the Lebesgue Dominated Convergence Theorem. As $u_n \to u^*$ uniformly, 
the continuity of $\mathscr{F}(r, \cdot)$ gives $\mathscr{F}(r, u_n(r)) \to \mathscr{F}(r, u^*(r))$ for every $r$. Similarly, the continuity of $\mathscr{H}$ gives $\mathscr{H}[u_n]\to \mathscr{H}[u^*]$.

Hence
\[
    \lim_{n\to\infty}\|\mathscr{T}u_n - \mathscr{T}u^*\|_\infty = 0,
\]
and $\mathscr{T}$ is continuous.

To prove $\mathscr{T}(\bar{K}_\alpha)$ is relatively compact, we decompose $\mathscr{T} = \mathcal{H} + \mathcal{G}$, where
\[
    (\mathcal{H}u)(t) = \mathcal{S}_\beta(t)\mathscr{H}[u], \qquad
    (\mathcal{G}u)(t) = \int_0^t (t-r)^{\beta-1}\mathcal{T}_\beta(t-r)\,\mathscr{F}(r,u(r))\,dr.
\]

The set $\bar{K}_\alpha$ is bounded, so by \textbf{(h1)} the set $\mathcal{H}[\bar{K}_\alpha]$ 
is bounded in $V$. For every fixed $t\in[0,1]$, the operator $\mathcal{S}_\beta(t)$ is 
compact (Lemma~\ref{L1} part (3)) and bounded. Therefore the set
\[
    \mathcal{H}(\bar{K}_\alpha)(t) = \bigl\{\mathcal{S}_\beta(t)\mathscr{H}[u] : u\in\bar{K}_\alpha\bigr\}
\]
is relatively compact in $V$ for every $t\in[0,1]$.

Now we need to show, for every $t\in [0, 1]$ the relative compactness of 
\[
    \mathcal{G}(\bar{K}_{\alpha})(t)=\Big\{\int_0^t (t-r)^{\beta-1}\mathcal{T}_\beta(t-r)\,\mathscr{F}(r,u(r))\,dr\;:\; u\in \bar{K}_{\alpha}\Big\}.
\]
By assumption $(f2)$, there exists $M_{\alpha}>0$ such that
$\|\mathscr{F}(r,v)\|\le M_{\alpha}$ for every $r\in[0,1]$ and $v\in\bar{Z}_{\alpha}$.
Using Lemma~\ref{L1}, for every $u\in\bar{K}_{\alpha}$ and
$t\in[0,1]$,
\[
    \|(\mathcal{G}u)(t)\|
    \;\le\;
    \int_{0}^{t}(t-r)^{\beta-1}\,
    \|\mathcal{T}_{\beta}(t-r)\|\,\|\mathscr{F}(r,u(r))\|\,dr
    \;\le\;
    \frac{\beta M_{\alpha}}{\Gamma(\beta+1)}
    \int_{0}^{t}(t-r)^{\beta-1}\,dr
    \;=\;
    \frac{M_{\alpha}}{\Gamma(\beta+1)}.
\]
Hence, the family $\mathcal{G}(\bar{K}_{\alpha})$ is uniformly bounded in $C([0,1];V)$.

Let $t \in (0, 1]$, $\eta > 0$ with $t - \eta \geq 0$. For arbitrary small $\varepsilon > 0$, arbitrary $\bar{\mu} > 0$ and every 
$u \in\bar{K}_{\alpha} $, by using definition of $\mathcal{T}_{\beta}$ we decompose $\mathscr{G}(u)(t) $ as
\begin{align*}
\mathscr{G}(u)(t) 
&= \beta \int_0^{t-\eta} \int_{\bar{\mu}}^{\infty} 
    (t-s)^{\beta-1} \tau k_\beta(\tau)\, T\!\left((t-s)^\beta \tau\right) 
    \mathscr{F}(s, u(s))\, d\tau\, ds \\
&\quad + \beta \int_0^{t-\eta} \int_0^{\bar{\mu}} 
    (t-s)^{\beta-1} \tau k_\beta(\tau)\, T\!\left((t-s)^\beta \tau\right) 
    \mathscr{F}(s, u(s)\, d\tau\, ds \\
&\quad + \int_{t-\eta}^{t} (t-s)^{\beta-1} \mathcal{T}_\beta(t-s)\, \mathscr{F}(s, u(s))\, ds.
\end{align*}
Factoring out $T(\eta^\beta \bar{\mu})$ from the first integral by using $(S_2)$, we rewrite 
this as
\begin{align*}
\mathscr{G}(u)(t)
&= T\!\left(\eta^\beta \bar{\mu}\right) 
    \left[ \beta \int_0^{t-\eta} \int_{\bar{\mu}}^{\infty} 
    (t-s)^{\beta-1} \tau k_\beta(\tau)\, 
    T\!\left((t-s)^\beta \tau - \eta^\beta \bar{\mu}\right) 
    \mathscr{F}(s, u(s))\, d\tau\, ds \right] \\
&\quad + \beta \int_0^{t-\eta} \int_0^{\bar{\mu}} 
    (t-s)^{\beta-1} \tau k_\beta(\tau)\, T\!\left((t-s)^\beta \tau\right) 
    \mathscr{F}(s, u(s))\, d\tau\, ds \\
&\quad + \int_{t-\eta}^{t} (t-s)^{\beta-1} \mathcal{T}_\beta(t-s)\, \mathscr{F}(s, u(s))\, ds.
\end{align*}
Define the operator 
$\mathcal{F}_{\eta, \bar{\mu}} : \mathcal{G}(\bar{K}_{\alpha})(t) \to K$ by
\begin{align*}
\mathcal{F}_{\eta,\bar{\mu}}\!\left(\mathcal{G}(u)(t)\right)
&=T\!\left(\eta^\beta \bar{\mu}\right) 
    \left[ \beta \int_0^{t-\eta} \int_{\bar{\mu}}^{\infty}
    (t-s)^{\beta-1} \tau k_\beta(\tau)\,
    T\!\left((t-s)^\beta \tau - \eta^\beta \bar{\mu}\right)
    \mathscr{F}(s, u(s))\, d\tau\, ds \right].
\end{align*}
Since $T(\eta^\beta \bar{\mu})$ is a compact operator, and 
$\mathcal{G}$ maps bounded subsets into bounded 
subsets of $C([0,1]; V)$, implies $\mathcal{F}_{\eta,\bar{\mu}}(\bar{K}_{\alpha})(t)$ is relatively compact.

Moreover, for every $u\in \bar{K}_{\alpha}$, we estimate the remainder:
\begin{align*}
&\left\| \mathcal{F}_{\eta,\bar{\mu}}\!\left(\mathcal{G}(u)(t)\right) 
- \mathcal{G}(u)(t) \right\|_V \\
&\leq \beta \int_0^{t-\eta} \int_0^{\bar{\mu}} 
    (t-s)^{\beta-1} \tau k_\beta(\tau)\, 
    \left\| T\!\left((t-s)^\beta \tau\right) \mathscr{F}(s, u(s)) \right\|_V d\tau\, ds \\
&+\int_{t-\eta}^{t} (t-s)^{\beta-1} 
    \left\| \mathcal{T}_\beta(t-s)\, \mathscr{F}(s, u(s)) \right\|_V ds \\
&\leq \beta M_{\alpha}
    \left( \int_0^{t-\eta} \left( \int_0^{\bar{\mu}} \tau k_\beta(\tau)\, 
     d\tau \right) ds \right) + \frac{\beta M_{\alpha}}{\Gamma(\beta+1)} \int_{t-\eta}^{t} (t-s)^{\beta-1} ds.
\end{align*}
By the uniform boundedness of $\mathscr{F}$ and the contractive property of $T$,  we conclude that
\[
\lim_{\eta,\, \bar{\mu} \to 0} 
\left\| \mathcal{F}_{\eta,\bar{\mu}}\!\left(\mathcal{G}(u)(t)\right) 
- \mathcal{G}(u)(t) \right\|_V = 0,
\]
uniformly in $u \in \bar{K}_{\alpha}$. Therefore, $\mathcal{G}(\bar{K}_{\alpha})(t)$ is relatively compact for each $t \in (0, 1]$.
Combining the two parts, $\mathscr{T}(\bar{K}_\alpha)(t)$ is relatively compact 
for every $t\in[0,1]$.

\textit{Equicontinuity of $\mathscr{T}(\bar{K}_\alpha)$.}
Let $0 \le t_1 < t_2 \le 1$ and $u\in\bar{K}_\alpha$. Using Lemma~\ref{L1} and 
the bound $M_\alpha$ on $\mathscr{F}$, together with $N_\alpha > 0$ such that $\|\mathscr{H}[u]\|\le N_\alpha$ 
for all $u\in\bar{K}_\alpha$ (from \textbf{(h1)}):
\begin{align*}
    &\|(\mathscr{T}u)(t_2) - (\mathscr{T}u)(t_1)\| \\
    &\le \|[\mathcal{S}_\beta(t_2) - \mathcal{S}_\beta(t_1)]\mathscr{H}[u]\|
    + \left\|\int_{t_1}^{t_2}(t_2-r)^{\beta-1}\mathcal{T}_\beta(t_2-r)\mathscr{F}(r,u(r))\,dr\right\| \\
    &\quad + \left\|\int_0^{t_1}\bigl[(t_2-r)^{\beta-1}\mathcal{T}_\beta(t_2-r)
      - (t_1-r)^{\beta-1}\mathcal{T}_\beta(t_1-r)\bigr]\mathscr{F}(r,u(r))\,dr\right\| \\
    &\le \|[\mathcal{S}_\beta(t_2) - \mathcal{S}_\beta(t_1)]\mathscr{H}[u]\|
    + \frac{\beta M_\alpha}{\Gamma(\beta+1)}\cdot\frac{(t_2-t_1)^{\beta}}{\beta} \\
    &\quad + M_\alpha\int_0^{t_1}
      \left|(t_2-r)^{\beta-1} - (t_1-r)^{\beta-1}\right|
      \|\mathcal{T}_\beta(t_2-r)\|_{\mathcal{L}(V)}\,dr \\
    &\quad + \left\|\int_0^{t_1}(t_1-r)^{\beta-1}\bigl[\mathcal{T}_\beta(t_2-r)
      - \mathcal{T}_\beta(t_1-r)\bigr]\mathscr{F}(r,u(r))\,dr\right\|.
\end{align*}

By strong continuity of $\mathcal{S}_\beta(\cdot)$(Lemma~\ref{L1} part (2)), $\|[\mathcal{S}_\beta(t_2) - \mathcal{S}_\beta(t_1)]\mathscr{H}[u]\|\to 0$ as $t_2\to t_1$ and $\frac{\beta M_\alpha}{\Gamma(\beta+1)}\cdot\frac{(t_2-t_1)^{\beta}}{\beta} \to 0$ as $t_2\to t_1$. The third term $ M_\alpha\int_0^{t_1}\left|(t_2-r)^{\beta-1} - (t_1-r)^{\beta-1}\right|\|\mathcal{T}_\beta(t_2-r)\|_{\mathcal{L}(V)}\,dr$ tends to $0$ as $t_2\to t_1$, by the dominated convergence theorem. 

We will rewrite the final term for $\eta>0$ with $t_1-\eta>0$ as 
\begin{align*}
    &\left\|\int_0^{t_1}(t_1-r)^{\beta-1}\bigl[\mathcal{T}_\beta(t_2-r)
      - \mathcal{T}_\beta(t_1-r)\bigr]\mathscr{F}(r,u(r))\,dr\right\|\\[5pt]
      &\le \int_0^{t_1-\eta}\|(t_1-r)^{\beta-1}\bigl[\mathcal{T}_\beta(t_2-r)
      - \mathcal{T}_\beta(t_1-r)\bigr]\mathscr{F}(r,u(r))\|\,dr\\
      &\quad+\int_{t_1-\eta}^{t_1}(t_1-r)^{\beta-1}\|\mathcal{T}_\beta(t_2-r)\mathscr{F}(r,u(r))\|\,dr
      +\int_{t_1-\eta}^{t_1} (t_1-r)^{\beta-1}\|\mathcal{T}_\beta(t_1-r)\mathscr{F}(r,u(r))\|\,dr
\end{align*}
By the definition of $\mathcal{T}_{\beta}$, we have

\begin{align*}
   &\left\|\int_0^{t_1}(t_1-r)^{\beta-1}\bigl[\mathcal{T}_\beta(t_2-r)
      - \mathcal{T}_\beta(t_1-r)\bigr]\mathscr{F}(r,u(r))\,dr\right\|\\[5pt]
      &\le  \beta\int_0^{t_1-\eta}\int_{\bar\mu}^{\infty}
   \tau\,k_\beta(\tau)\,
   \bigl\|(t_1-r)^{\beta-1}\bigl[T\!\bigl((t_2-r)^\beta\tau\bigr)
          -T\!\bigl((t_1-r)^\beta\tau\bigr)\bigr]
          \mathscr{F}(r,u(r))\bigr\|\,d\tau\,dr\\
&\quad
+\beta\int_0^{t_1-\eta}\int_0^{\bar\mu}
   \tau\,k_\beta(\tau)\,
   \bigl\|\bigl[(t_1-r)^{\beta-1}\bigl[T\!\bigl((t_2-r)^\beta\tau\bigr)
          -T\!\bigl((t_1-r)^\beta\tau\bigr)\bigr]
          \mathscr{F}(r,u(r))\bigr\|\,d\tau\,dr\\
&\quad
+\int_{t_1-\eta}^{t_1}(t_1-r)^{\beta-1}
   \bigl\|\mathcal{T}_\beta(t_2-r)\mathscr{F}(r,u(r))\bigr\|\,dr
+\int_{t_1-\eta}^{t_1}(t_1-r)^{\beta-1}
   \bigl\|\mathcal{T}_\beta(t_1-r)\mathscr{F}(r,u(r))\bigr\|\,dr\\[5pt]
   \end{align*}
\begin{align*}
&\le \beta\,
\int_0^{t_1-\eta}\int_{\bar\mu}^{\infty}
   \tau\,k_\beta(\tau)\,(t_1-r)^{\beta-1}\\
&\qquad\times
   \bigl\|T\!\bigl((t_2-r)^\beta\tau-\eta^\beta\bar\mu\bigr)
         -T\!\bigl((t_1-r)^\beta\tau-\eta^\beta\bar\mu\bigr)
   \bigr\|_{\mathcal{L}(V)}
   \bigl\|T(\eta^\beta\bar\mu)\bigr\|_{\mathcal{L}(V)}
   \bigl\|\mathscr{F}(r,u(r))\bigr\|\,d\tau\,dr\\
&\quad  +\beta\int_0^{t_1-\eta}\int_0^{\bar\mu}
   \tau\,k_\beta(\tau)\,(t_1-r)^{\beta-1}\\
&\qquad\times
   \bigl\|T\!\bigl((t_2-r)^\beta\tau-\eta^\beta\tau\bigr)
         -T\!\bigl((t_1-r)^\beta\tau-\eta^\beta\tau\bigr)
   \bigr\|_{\mathcal{L}(V)}
   \bigl\|T(\eta^\beta\tau)\bigr\|_{\mathcal{L}(V)}
   \bigl\|\mathscr{F}(r,u(r))\bigr\|\,d\tau\,dr\\
&\quad
+\frac{\beta M_\alpha}{\Gamma(\beta+1)}
   \int_{t_1-\eta}^{t_1}(t_1-r)^{\beta-1}\,dr
+\frac{\beta M_\alpha}{\Gamma(\beta+1)}
   \int_{t_1-\eta}^{t_1}(t_1-r)^{\beta-1}\,dr
   \end{align*}
\begin{align*}
&\le \beta M_{\alpha}\,
\int_0^{t_1-\eta}\int_{\bar\mu}^{\infty}
   \tau\,k_\beta(\tau)\,(t_1-r)^{\beta-1}\\
&\qquad\times
   \bigl\|T\!\bigl((t_2-r)^\beta\tau-\eta^\beta\bar\mu\bigr)
         -T\!\bigl((t_1-r)^\beta\tau-\eta^\beta\bar\mu\bigr)
   \bigr\|_{\mathcal{L}(V)}\,d\tau\,dr\\
&\quad  +\beta M_{\alpha}\,\int_0^{t_1-\eta}\int_0^{\bar\mu}
   \tau\,k_\beta(\tau)\,(t_1-r)^{\beta-1}\\
&\qquad\times
   \bigl\|T\!\bigl((t_2-r)^\beta\tau-\eta^\beta\tau\bigr)
         -T\!\bigl((t_1-r)^\beta\tau-\eta^\beta\tau\bigr)
   \bigr\|_{\mathcal{L}(V)}\,d\tau\,dr+\frac{2\eta \beta M_\alpha}{\Gamma(\beta+1)} 
\end{align*}
The first and second terms tend to $0$ by dominated convergence theorem and continuity of $T$ in the uniform operator norm topology Theorem~\ref{Them. 3.3}, as $t_2\to t_1$ and the last term tends to $0$ as $\eta\to 0$. All bounds are independent of $u\in\bar{K}_\alpha$, so the family 
$\mathscr{T}(\bar{K}_\alpha)$ is equicontinuous.

Since $\mathscr{T}(\bar{K}_\alpha)$ is equicontinuous and relatively compact at each 
$t\in[0,1]$. Hence, Arzel\`{a}--Ascoli theorem implies that $\mathscr{T}$ is a compact operator.

It remains to show that
\[
    \inf_{u\in\partial K_\alpha}\|\mathscr{T}u\|_\infty > 0.
\]
Let $u\in\partial K_\alpha$ be arbitrary, and let $t\in[0,1]$. Since $\mathscr{H}[u]\succeq\Phi_\alpha$ 
by assumption $(h1)$, and $\mathcal{S}_\beta(t)$ is a positive linear operator, we have
\[
    \mathcal{S}_\beta(t)\mathscr{H}[u] \succeq \mathcal{S}_\beta(t)\Phi_\alpha.
\]
Similarly, since $\mathscr{F}(r, u(r))\succeq\gamma_\alpha(r)$ for every $r\in[0,t]$ 
by assumption $(f2)$, and $\mathcal{T}_\beta(t-r)$ is positive and 
$(t-r)^{\beta-1}\ge 0$, the Bochner integral is monotone:
\[
    \int_0^t(t-r)^{\beta-1}\mathcal{T}_\beta(t-r)\mathscr{F}(r,u(r))\,dr
    \succeq \int_0^t(t-r)^{\beta-1}\mathcal{T}_\beta(t-r)\gamma_\alpha(r)\,dr.
\]
Adding these two inequalities,
\[
    (\mathscr{T}u)(t) \succeq \mathcal{S}_\beta(t)\Phi_\alpha 
    + \int_0^t (t-r)^{\beta-1}\mathcal{T}_\beta(t-r)\gamma_\alpha(r)\,dr 
    \quad \text{for every } t\in[0,1].
\]
Since the normal cone $Z\subset V$ is normal, there exists a constant $c>0$ such that 
$0\preceq x\preceq y$ implies $\|x\|\le c\|y\|$. Applying this to the above,
\[
    \|(\mathscr{T}u)(t)\| \ge \frac{1}{c}
    \left\|\mathcal{S}_\beta(t)\Phi_\alpha 
    + \int_0^t(t-r)^{\beta-1}\mathcal{T}_\beta(t-r)\gamma_\alpha(r)\,dr\right\|.
\]
Evaluating at $t = t_0$ (from assumption \textbf{(h2)}),
\[
    \|\mathscr{T}u\|_\infty \ge \|(\mathscr{T}u)(t_0)\| \ge 
    \frac{1}{c}\left\|\mathcal{S}_\beta(t_0)\Phi_\alpha 
    + \int_0^{t_0}(t_0-r)^{\beta-1}\mathcal{T}_\beta(t_0-r)\gamma_\alpha(r)\,dr\right\|
    > 0.
\]
The right-hand side is strictly positive by \textbf{(h2)} and is independent of 
$u\in\partial K_\alpha$. Therefore,
\[
    \inf_{u\in\partial K_\alpha}\|\mathscr{T}u\|_\infty > 0.
\]
We have verified that $\mathscr{T}: \bar{K}_\alpha \to K$ is a compact operator 
satisfying
\[
    \inf_{u\in\partial K_\alpha}\|\mathscr{T}u\|_\infty > 0.
\]
By the Birkhoff--Kellogg type theorem (Theorem~\ref{B-K}), there exist 
$\lambda_\alpha\in(0,+\infty)$ and $u_\alpha\in\partial K_\alpha$ such that
\[
    u_\alpha = \lambda_\alpha \mathscr{T}(u_\alpha).
\]
Unfolding the definition of $\mathscr{T}$, this means
\[
    u_\alpha(t) = \lambda_\alpha\mathcal{S}_\beta(t)\mathscr{H}[u_\alpha]
    + \lambda_\alpha\int_0^t(t-r)^{\beta-1}\mathcal{T}_\beta(t-r)
      \mathscr{F}(r, u_\alpha(r))\,dr, \quad \forall\, t\in[0,1].
\]
That is, $(\lambda_\alpha, u_\alpha)$ is a positive eigenvalue-eigenfunction pair 
solving the problem ~\eqref{eq:main}, with $u_\alpha\in\partial K_\alpha\subset K$ a nonnegative mild eigenfunction.
\end{proof}
\end{theorem}

\section{Applications}\label{App}

In this section, we apply the theoratical result to a concrete example of fractional parabolic partial differential equations. Throughout this Section, we let $\Omega\subset\mathbb{R}^{k}$ be a
bounded domain with $C^{2}$-boundary $\partial\Omega$, and set the Banach lattice $V=L^{p}(\Omega)$ for some fixed $2\le p<\infty$, equipped with $L^P$ norm. We will work in the positive cone,
\[
    Z=\bigl\{u\in L^{p}(\Omega):u(x)\ge 0\;\text{a.e.}\;x\in\Omega\bigr\},
\] 
and 
\[
    K=\{u\in C([0, 1],\,L^{p}(\Omega))\;:\; u(t,x)\ge \sigma\|u\|_{\infty}e_1(x)\;\text{a.e., for all}\;t\in [0, 1]\},
\]
where $\sigma\in (0, 1)$ is fixed $e_1\in Z\setminus \{0\}$ and is the first positive (normalized) eigenfunction of $-\Delta$ under Dirichlet boundary conditions, i.e.
\[
     -\Delta e_{1}=\mu_{1}\,e_{1}\text{ in }\Omega,\qquad
    e_{1}\big|_{\partial\Omega}=0,\qquad
    e_{1}(x)>0\text{ in }\Omega.
\]
Which is a normal cone with normality constant $c=1$. For every $v\in\partial K_{\alpha} $
\begin{equation}\label{cone-bound}
    u(t,x)\;\ge\;\sigma_{0}\,\alpha\,e_{1}(x)
    \quad\text{a.e., for all }t\in[0,1].
\end{equation} We define the linear operator
$\mathscr{A}:D(\mathscr{A})\subset V\to V$ by $\mathscr{A}u=-\Delta u$, with domain $D(\mathscr{A})=W^{2,p}(\Omega)\cap W^{1,p}_{0}(\Omega)$(see, e.g., \cite{Pazzy} Sec:8.3). It is classical that $-A$ generates a compact and positive $C_0$-semigroup of contractions $\{T(t)\}_{t\ge 0}$ on $L^{p}(\Omega)$; thus
assumption $(a1)$ is satisfied. For every fixed $\alpha>0$ and $\phi\in Z$, the operator families $\mathcal{S}_\beta(t)$ and $\mathcal{T}_\beta(t)$
are positive, bounded, and compact for $t>0$ by Lemma~\ref{L1}.

\subsection*{Example 1}

Consider the following problem:
\begin{equation}\label{eq:ex1}
\begin{cases}
{}^{C}D_{t}^{\beta}\,u(t,x)-\Delta u(t,x)
    =\lambda\,\mathscr{F}_{1}(t,x,u(t,x)),
    &(t,x)\in[0,1]\times\Omega,\\[4pt]
\displaystyle u(0,x)=\lambda\int_{0}^{1}\omega(s)\,u(s,x)\,ds,
    &x\in\Omega,\\[4pt]
u(t,x)=0,
    &(t,x)\in[0,1]\times\partial\Omega,
\end{cases}
\end{equation}
where $0<\beta<1$, $\omega\in L^{1}(0,1)$ with $\omega(s)\ge\omega_{0}>0$
a.e.\ on $[0,1]$ for some constant $\omega_{0}>0$,, and the
nonlinear reaction term is chosen as
\[
    \mathscr{F}_{1}(t,x,u(t, x))=\rho(t)\,\phi_{0}(x)+\sigma(t)\,\frac{u(t, x)}{1+u(t, x)},
    \qquad (t,x,v)\in[0,1]\times\Omega \quad u(t, x)\ge 0,
\]

where $\phi_{0}\in Z\setminus\{0\}$ is a fixed nonnegative function in $L^{p}(\Omega)$,
and $\rho,\sigma:[0,1]\to[0,+\infty)$ are measurable functions satisfying
$\rho\in L^{1}(0,1)$ with $\rho(t)\ge\rho_{0}>0$ a.e., and $\sigma\in L^{\infty}(0,1)$.

Setting $u(t)=u(t,\cdot)\in L^{p}(\Omega)$ and
\[
    A=\Delta , \quad \mathscr{F}(t,u(t))(x)=\mathscr{F}_{1}(t,x,u(t,x)),\qquad
    \mathscr{H}[u](x)=\int_{0}^{1}\omega(s)\,u(s,x)\,ds,
\]
 then we can rewrite the system~\eqref{eq:ex1} in the abstract form,
\[
\begin{cases}
    ^CD_t^{\beta}u(t)=Au(t)+\lambda\mathscr{F}(t, u(t))\quad t\in[0,1]\\
    u(0)=\mathscr{H}[u]
\end{cases}
\]
Now, we will verify all the conditions of Theorem~\ref{existence}

For each $v\in\bar{Z}_{\alpha}$, the map $t\mapsto \mathscr{F}(t,u(t))(x)=\rho(t)\phi_{0}(x)
    +\sigma(t)u(t, x)/(1+u(t, x))$ is measurable (product of measurable functions). For each $t\in[0,1]$, the continuity of
    $\mathscr{F}(t,\cdot):\bar{Z}_{\alpha}\to Z$ in the $L^{p}$-norm follows from the
    dominated convergence theorem, since
    $|u(t, x)/(1+u(t, x))|\le u(t, x)$ and $v\mapsto u/(1+u)$ is continuous on $[0,1]$.
    Which show $g$ is a Carath\'eodory map. Moreover, since $u(t, x)/(1+u(t, x))<1$ for all
    $u(t, x)\ge 0$, we have $\mathscr{F}(t,u(t))\in Z$ whenever $u(t)\in Z$, and
    $\|\mathscr{F}(t,v)\|_{L^{p}}\le\|\rho\|_{L^{1}}\|\phi_{0}\|_{L^{p}}
    +\|\sigma\|_{L^{\infty}}|\Omega|^{1/p}$
    uniformly in $(t,u(t))\in[0,1]\times\bar{Z}_{\alpha}$. This show that $\mathscr{F}$ sends bounded subsets of $[0,1]\times\bar{Z}_{\alpha}$ into
    bounded subsets of $Z$. Furthermore, for every $u\in\partial K_{\alpha}$ and every $t\in[0,1]$,
    \[
        \mathscr{F}(t,u(t))(x)
        =\rho(t)\phi_{0}(x)+\sigma(t)\,\frac{u(t,x)}{1+u(t,x)}
        \succcurlyeq \rho(t)\phi_{0}(x)
        =:\gamma_{\alpha}(t)(x).
    \]
    Since $\rho(t)\ge\rho_{0}>0$ a.e.\ and $\phi_{0}\in Z\setminus\{0\}$, the
    function $\gamma_{\alpha}:[0,1]\to Z$ is nontrivial. Hence, hypotheses $(f1)$ and $(f2)$ hold.

The operator $\mathscr{H}:C([0,1];\bar{Z}_{\alpha})\to Z$ defined by
    $\mathscr{H}[u](x)=\int_{0}^{1}\omega(s)u(s,x)\,ds$ is linear and bounded, since
    \[
        \|\mathscr{H}[u]\|_{L^{p}}
        \le\|\omega\|_{L^{1}}\sup_{s\in[0,1]}\|u(s)\|_{L^{p}}
        =\|\omega\|_{L^{1}}\|u\|_{\infty}\le\|\omega\|_{L^{1}}\alpha.
    \]
    For $u\in\partial K_{\alpha}$, for cone lower bound ~\eqref{cone-bound} and the assumption $\omega(s)\ge \omega_{0}>0$ a. e., give 
    \[
        \mathscr{H}[u]= \int_0^1 \omega(s)u(s, x)ds \ge \omega_0 \alpha \sigma e_1(x)=\Phi_{\alpha}(x)
    \]
    Since, $\omega_{0},\sigma,\alpha>0$ and $e_{1}\in Z\setminus\{0\}$ we have $\Phi_{\alpha}\in Z\setminus \{0\}$, and hypothesis $(h1)$ holds.

With $\Phi_{\alpha}=\omega_0 \alpha \sigma e_1\in Z\setminus\{0\}$ and $\gamma_{\alpha}(t)=\rho(t)\phi_{0}$, the condition $(h2)$ requires the existence of $t_{0}\in(0,1]$ such that
    \[
        \left\|\mathcal{S}_{\alpha}\Phi_{\alpha}+\int_{0}^{t_{0}}(t_{0}-r)^{\beta-1}
        \mathcal{T}_{\beta}(t_{0}-r)\,\gamma_{\alpha}(r)\,dr\right\|_{L^{p}}>0.
    \]
    Since $\mathcal{S}_{\alpha}(t_0)$ and $\mathcal{T}_{\beta}(t_{0}-r)$ are positive operators, and
    $\Phi_{\alpha}=\omega_{0}\sigma_{0}\alpha\,e_{1}> 0$. By strict positivity of the heat semigroup generated by $A=\Delta$, strict positivity holds. Taking $t_{0}=1$ yields $(h2)$.

Under the above conditions on $\rho$, $\sigma$, $\omega$, and $\phi_{0}$, by Theorem \ref{existence} there exist a positive eigenvalue $\lambda_{\alpha}\in(0,+\infty)$ and a corresponding nonnegative mild eigenfunction $u_{\alpha}\in\partial K_{\alpha}$ solving the problem~\eqref{eq:ex1}.
\begin{remark}
    A specific example satisfying all the required assumptions is obtained by taking $\Omega=(0,\pi)\subset\mathbb{R}$,
    $p=2$, $\beta=1/2$. Let $\phi_{0}(x)=\sin x$, $\rho(t)=e^{-t}$, $\sigma(t)=\cos^{2}(\pi t)$,
    and $\omega(s)\equiv 1$. One can verify that $\rho\ge e^{-1}>0$, it is easy to check that $\phi_{0}\in Z\setminus\{0\}$, that $\rho$ is bounded below by $e^{-1}>0$ and that all assumptions hold.
\end{remark}


\section*{Data Availability Statement}

Data sharing is not applicable to this article, as no datasets were generated 
or analysed during the current study. All theoretical results, definitions, 
and proofs are contained within the manuscript.
\section*{Disclosure statement}
The authors declare that they have no competing interests.
\section*{Funding}
The authors received no specific funding for this submission.

\end{document}